\newtheorem{theorem}{Theorem}
\newtheorem{lemma}[theorem]{Lemma}
\newtheorem{remark}[theorem]{Remark}
\newtheorem{proposition}[theorem]{Proposition}
\theoremstyle{remark}
\newtheorem{example}{Example}
\DeclareMathOperator{\ran}{Ran}
\DeclareMathOperator{\Ker}{Ker}
\begin{document}

\title{On the functional Hodrick-Prescott filter with non-compact operators}

\author{ Boualem Djehiche\thanks{Department of Mathematics, The Royal Institute of
Technology, S-100 44 Stockholm, Sweden. e-mail: boualem@math.kth.se}\, \thanks{Financial support from the Swedish Export Corporation (SEK) is gratefully acknowledge
}\,\, Astrid Hilbert \thanks{School of Computer Science, Physics and Mathematics, Linnaeus University, Vejdesplats 7, SE-351 95 V\"axj\"o, Sweden. e-mail: astrid.hilbert@lnu.se}\,\,  and \,  Hiba Nassar\thanks{School of Computer Science, Physics and Mathematics, Linnaeus University, Vejdesplats 7, SE-351 95 V\"axj\"o, Sweden. e-mail: hiba.nassar@lnu.se}}

\date{\today}
 \maketitle
 
\begin{abstract}
 We study a version of the functional Hodrick-Prescott filter where the associated operator is not necessarily compact, but merely closed and densely defined with closed range. We show that the associated optimal smoothing operator preserves the structure obtained in the compact case, when the underlying distribution of the data is Gaussian.

\end{abstract}
\bigskip
\noindent
\small{ {\it JEL classifications}: C5, C22, E32.}
\medskip

\noindent \small{ {\it AMS 2000 subject classifications}: 62G05, 62G20.}

\medskip
\noindent
\small{{\it Key words and phrases}: Inverse problems, adaptive estimation, Hodrick-Prescott filter, smoothing, trend extraction, Gaussian measures on a Hilbert space.}

\section {Introduction}

The study of functional data analysis is motivated by their applications in various fields of statistical estimations and statistical inverse problems (see Ramsay and Silverman (1997), Bosq (2000),  M\"uller and Stadtm\"uller (2005) and references therein). One of the most common assumptions in the studies in statistical inverse problems is to deal with compact operators. This is due to their tractable spectral properties.  The functional Hodrick-Prescott filter is often formulated as a statistical inverse problem  that reconstructs an 'optimal smooth signal' $y$ that solves an equation $Ay=v$, corrupted by a noise $v$ which is apriori unobservable, from observations $x$ corrupted by a noise $u$ which is also apriori unobservable: 
\begin{equation}\label{hod}
\left\{\begin {split}
&x= y+u, \\
&Ay=v,
\end{split}
\right.
\end{equation}
where $A: H_1\longrightarrow H_2$ is a compact operator between two appropriate Hilbert spaces $H_1$ and $H_2$.

\medskip\noindent  By introducing a smoothing operator $B$, the 'optimal smooth signal' $y(B,x)$,  associated  with $x$, is defined by 
\begin{equation}\label{HP-H-trend}
y(B,x) := \arg \min_y \left\{ \left\| x-y \right\|_{H_1}^2 +  \langle Ay, BAy\rangle_{H_2} \right\},
\end{equation} 
provided that 
$$
\langle Ah, BAh\rangle_{H_2} \ge 0,\quad h\in H_1.
$$
In \cite{djehiche3}, the optimal smoothing operator is characterized as the minimizer of the difference between the optimal smoothing signal and the best predictor of the signal given the data $x$, $E[y|x]$, when the noise $u$ and the  signal $v$ are independent Hilbert space-valued Gaussian random variables with zero means and covariance operators $\Sigma_u$ and $\Sigma_v$.

\medskip\noindent  In this paper, we extend the functional Hodrick-Prescott filter to the case where the operator $A$ is not necessarily compact. Moreover, we show that the optimal smoothing parameter preserves the structure obtained in \cite{djehiche3}, for the compact case. 

\medskip\noindent An important class of non-compact operators to which we wish to extend the  Hodrick-Prescott filter  includes the Laplace operator $A=-\frac{d^2}{dt^2}$, with Dirichlet boundary conditions, whose domain 
$$
D(A) = \{y\in H^2 ([0,1]),\;\; y(0) = y(1) =0\},
$$
where,  $H^2([0,1])$ is the Sobolev space of functions whose weak derivatives of order less than or equal to two belong to $ L^2([0,1])$ (see e.g. \cite{Dautray} for further examples).

\medskip\noindent
The paper is organized as follows. In Section 2, we generalize the functional Hodrick-Prescott filter under the assumption that the operator $A$ is closed and densely defined with closed range. In Section 3, we prove that the optimal smoothing operator maintains same form when the covariance operators $\Sigma_u$  and $\Sigma_v $ are trace class operators. In Section 4, we illustrate this filter with two examples.  In Section 5, we extend this characterization to the case where the covariance operators $\Sigma_u$  and $\Sigma_v $ are not trace class such e.g. white noise.

\section{A Functional Hodrick-Prescott filter with closed operator} 

Let $H_1$ and $H_2$ be two separable Hilbert spaces, with norms $\left\| \cdot\, \right\|_{H_i}$ and inner products $\langle \cdot ,\cdot \rangle _{H_i},\,\, i=1,2$, and $x \in H_1$ be a functional time series of observables.  Given a linear operator  $A:H_1 \rightarrow H_2$, the Hodrick-Prescott filter extracts an 'optimal smooth signal' $y\in  H_1$ that solves an equation $Ay=v$, corrupted by a noise $v$ which is apriori unobservable, from observations $x$ corrupted by a noise $u$ which is also apriori unobservable: 
\begin{equation}\label{hod}
\left\{\begin {split}
&x= y+u, \\
&Ay=v.
\end{split}
\right.
\end{equation}
\noindent Optimality of the extracted signal is achieved by the following Tikhonov-Phillips regularization of the system (\ref{hod}), by introducing a linear operator $B :H_2 \rightarrow H_2$ which acts as a smoothing parameter:
\begin{equation}\label{HP-H-trend}
y(B,x) := \arg \min_y \left\{ \left\| x-y \right\|_{H_1}^2 +  \langle Ay, BAy\rangle_{H_2} \right\},
\end{equation}
provided that 
\begin{equation}\label{positive}
\langle Ah, BAh\rangle_{H_2} \ge 0,\quad h\in H_1.
\end{equation}

\medskip\noindent As suggested in \cite{djehiche3}, the optimal smoothing operator minimizes the gap between the conditional expected value of $y$ given $x$, $E[y|x]$, which is the best predictor of $y$ given $x$, and $y(B,x)$:
\begin{equation}\label{B-best}
\hat B = \arg \min_B \left\| E[y|x] - y(B ,x) \right\|_{H_1}^2.
\end{equation}

\medskip\noindent The main purpose of this work is to extend the characterization of the optimal smoothing operator obtained in  \cite{djehiche3} to the case where the linear operator $A$ is not necessarily compact, and $u$ and $v$ are independent Hilbert space-valued generalized Gaussian random variables with zero means and covariance operators $\Sigma_u$ and $\Sigma_v$. 

\medskip\noindent
We  assume that the linear operator $A:H_1 \rightarrow H_2$ is

\medskip
{\bf Assumption (1)} 
\begin{enumerate}
\item[(1a)] Closed and defined on a dense subspace $\mathcal{D} (A)$ of $H_1$, 

\item[(1b)] Its  range, $\ran (A)$, is closed.
\end{enumerate}

\noindent Its Moore-Penrose generalized inverse  $A^\dagger$ is defined on  
$$ 
\mathcal{D}(A^\dagger ) = \ran (A)+ \ran (A)^\bot,  
$$ 
i.e. $\Ker(A ^\dagger) = \ran( A)^\bot$. 

\medskip\noindent Assumption (1) is equivalent to the fact that $A ^\dagger$ is bounded (see \cite{Groetsch}, \cite{kato}). 

\medskip\noindent For  all $v \in \mathcal{D} (A^\dagger ) $, the set of all solutions of the equation 
$$
Ay=v,\qquad y\in \mathcal{D}(A),
$$ 
is given by 
$$ \{ y ^\dagger + y_0; \quad y_0 \in \Ker(A) \},$$
where $y ^\dagger$ is the unique minimal-norm solution given by $y^\dagger = A^\dagger v.$

\medskip\noindent Hence, for arbitrary  $y_0 \in \Ker(A)$, we have
\begin{equation}\label{solution1}
 y= y_0 + A^\dagger v,
\end{equation}  
and,  in view of (\ref{hod}),
\begin{equation}\label{solution2}
 x= y_0 + A^\dagger v + u.
\end{equation}

\medskip\noindent Let $\Pi:= A^\dagger A$. By the Moore-Penrose equations, we have $\Pi A^\dagger= A^\dagger$, $\Pi^2=\Pi$ and $\Pi^*=\Pi$ (self-adjoint). Therefore, $\Pi$ an orthogonal projector. It is easily checked that, 
for every $\xi\in H_1$,  the elements $\Pi\xi$ and $(I_{H_1}-\Pi)\xi$ are orthogonal:
\begin{equation}\label{Pi-2}
<\Pi\xi,(I_{H_1}-\Pi)\xi >=0,
\end{equation}
and
\begin{equation}\label{Pi}
\mbox{Ker}(A)=\mbox{Ker}(\Pi)=\mbox{Ran}(I_{H_1}-\Pi).
\end{equation}
Moreover, we have $(I_{H_1}-\Pi)y=y_0$ and $A^\dagger v=\Pi y$.

\medskip\noindent In the next proposition we show that the problem (\ref{HP-H-trend}) has a unique solution for a class of linear smoothing operators $B$ satisfying (\ref{positive}).

\begin{proposition}\label{mainprop}
Let $ A:H_1 \longrightarrow H_2$ be a closed, linear operator and its domain is dense in $H_1$.  Assume further the smoothing operator $B: H_2 \longrightarrow H_2$ is closed, densely defined and satisfies
\begin{equation}\label{B}
\langle Ah, BAh\rangle_{H_2} \ge 0,\quad h\in H_1.
\end{equation}
Then, there exists a unique $y(B ,x) \in H_1$ which minimizes the functional 
\[  J_B (y) = \left\| x-y \right\|^2_{H_1} + \langle Ay, BAy\rangle_{H_2}.\]
This minimizer is given by the formula 
\begin{equation}\label{mini}
 y(B, x) = (I_{H_1}+ A^*BA)^{-1} x. 
\end{equation}
\end{proposition}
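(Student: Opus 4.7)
The plan is to treat $J_B$ as a strictly convex quadratic functional on $H_1$ and identify its minimizer as the unique solution of the associated Euler--Lagrange equation. Uniqueness comes essentially for free: $y \mapsto \|x-y\|_{H_1}^2$ is strictly convex, while by hypothesis (\ref{B}) the quadratic form $y \mapsto \langle Ay, BAy\rangle_{H_2}$ is non-negative and hence convex, so $J_B$ is strictly convex on $\mathcal{D}(A)$ and admits at most one minimizer.

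For existence and to derive the explicit formula (\ref{mini}), I would compute the first variation. For $y, h \in \mathcal{D}(A)$ and $t \in \mathbb{R}$, a direct expansion gives
\[
J_B(y+th) = J_B(y) - 2t\langle x-y, h\rangle_{H_1} + t\bigl(\langle Ah, BAy\rangle_{H_2} + \langle Ay, BAh\rangle_{H_2}\bigr) + t^2 R(h),
\]
with $R(h)$ independent of $t$. Since the quadratic form in $y$ depends only on the symmetric part of $B$, one may replace $B$ by $(B+B^*)/2$ and assume without loss of generality that $B$ is self-adjoint. The first-order condition then reads $\langle y-x, h\rangle_{H_1} + \langle Ah, BAy\rangle_{H_2} = 0$ for every admissible $h$, which by the definition of the adjoint yields the normal equation $(I_{H_1} + A^*BA)y = x$.

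It remains to show that $T := I_{H_1} + A^*BA$ is a bijection of $H_1$ with bounded inverse, so that (\ref{mini}) is well posed. Since $A$ is closed and densely defined, classical results (von Neumann's theorem, or the Friedrichs representation theorem applied to the closed non-negative quadratic form $h \mapsto \langle Ah, BAh\rangle_{H_2}$) realize $A^*BA$ as a non-negative self-adjoint operator, whence $T \geq I_{H_1}$. This forces $T^{-1}$ to exist as a bounded positive contraction on $H_1$. A completion-of-squares argument
\[
J_B(y) = \langle Ty, y\rangle_{H_1} - 2\langle x, y\rangle_{H_1} + \|x\|_{H_1}^2 = \|T^{1/2}y - T^{-1/2}x\|_{H_1}^2 + \|x\|_{H_1}^2 - \|T^{-1/2}x\|_{H_1}^2
\]
then confirms that the minimum is attained precisely at $y = T^{-1}x$, which is (\ref{mini}).

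The main technical obstacle is exactly the previous step: giving rigorous meaning to $A^*BA$ as an operator when neither $A$ nor $B$ need be bounded, so that the formal manipulations above stand. The cleanest remedy is to bypass the product altogether and work with the closed quadratic form $q(y) = \|y\|_{H_1}^2 + \langle Ay, BAy\rangle_{H_2}$ on the form domain of $A$; the form version of Lax--Milgram then produces both the operator $T$ and the boundedness of $T^{-1}$ in one stroke, so that the variational calculation above becomes legitimate without having to analyze unbounded operator products directly.
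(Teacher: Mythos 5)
Your proof is correct and follows essentially the same route as the paper: the variational computation and completion of squares make explicit what the paper dismisses as ``immediate to check,'' and the crucial invertibility of $I_{H_1}+A^*BA$ rests in both arguments on von Neumann's theorem that $I+D^*D$ has an everywhere-defined bounded inverse for $D$ closed and densely defined --- the paper applies it directly to $D=\sqrt{B}A$, while you reach the same operator through the Friedrichs/form representation theorem. Both versions leave the same technical point at the same level of rigor, namely that $\sqrt{B}A$ (equivalently, the form $h\mapsto\langle Ah,BAh\rangle_{H_2}$) is genuinely closed and densely defined, which you at least flag as the main obstacle.
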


\begin{proof}
It is immediate to check that the minimizer of the functional 
$\left\| x-y \right\|_{H_1}^2 +  \langle Ay, BAy\rangle_{H_2}$
is $( I_H+ A^*BA)^{-1} x$ provided that the function $( I_H+ A^*BA)^{-1}$ exists everywhere. But, since the operator $D:= \sqrt{B} A$ is closed and densely defined, thanks to a result by Neumann (see \cite{riesz}, Sec. 118, Chap. VII),  $( I_H+ A^*BA)^{-1}=(I+D^*D)^{-1}$ exists everywhere and is bounded. This finishes the proof of the proposition.
\end{proof}

\section{HP filter associated with trace class covariance operators}

In this section we prove that the optimal smoothing operator which solves (\ref{B-best}) has the same structure as in \cite{djehiche3}, when $u$ and $v$ are independent Gaussian random variables with zero mean and trace class covariance operators $\Sigma_u$ and $\Sigma_v$.

\medskip\noindent In view of (\ref{solution1}) and (\ref{solution2}), a stochastic model for $(x,y)$ being determined by models for $y_0$ and $(u,v)$, we assume 

\medskip{\bf Assumption (2)} $y_0$ deterministic.

\medskip{\bf Assumption (3)}  $u$ and $v$ are independent Gaussian random variables with zero mean and covariance operators $\Sigma_u$ and $\Sigma_v$ respectively.   

\medskip\noindent Assumption (2) is made to ease the analysis. The independence between $u$ and $v$ imposed in Assumption (3) is natural because apriori there should not be any dependence between the 'residual' $u$ which is due to the noisy observation $x$ and the required degree of smoothness of the signal $y$.

\medskip\noindent Assumption (3) implies that $\Pi y=A^\dagger v$ and $u$ are also independent. Thus, with regard to the following decomposition of $x$,
$$
x=y_0+\Pi y+\Pi u+(I_{H_1}-\Pi)u,
$$
it is natural to assume that even the orthogonal random variables $\Pi u$ and $(I_{H_1}-\Pi)u$ independent. This would mean that the input $x$ is decomposed into three independent random variables. This is actually the case for the classical HP filter. Also, as we will show below, thanks to this property the optimal smoothing operator has the form of a 'noise to signal ratio' in line with the classical HP filter.  

\medskip {\bf Assumption (4)} The orthogonal (in $ H_1$) random variables $\Pi u$ and $(I_{H_1}-\Pi)u$ are independent:
\begin{equation}\label{indep-1}
\Pi\Sigma_u=\Sigma_u\Pi.
\end{equation}

\medskip\noindent We note that (\ref{indep-1}) is equivalent to 
\begin{equation}\label{indep-2}
\Pi\Sigma_u\Pi=\Pi\Sigma_u.
\end{equation}

\medskip\noindent Given Assumptions (2) and (3), by (\ref{solution1}) and (\ref{solution2}), it holds that $(x,y)$ is Gaussian with  
 mean $(E[x],E[y])=(y_0, y_0)$, and covariance operator
\begin{equation}\label{cov(x,y)}
\begin{split}
\Sigma &= \left( \begin{array}{ccc}
\Sigma _u + Q_v  && Q_v \\ 
Q_v &&  Q_v 
\end{array} 
\right), 
\end{split}
\end{equation}
where,
\begin{equation}\label{Qv}
Q_v:= A^\dagger\Sigma_v(A^\dagger)^*.
\end{equation}

\begin{lemma}
\label{THS}
The linear operator $Q_v$ is trace class. 

\medskip \noindent Moreover, the linear operator
$$
T:=Q_v\left[\Sigma_u+Q_v\right]^{-1/2}
$$
is Hilbert-Schmidt.
\end{lemma}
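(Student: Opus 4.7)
The plan is to treat the two claims separately: the first is a bookkeeping argument based on the ideal property of the trace class, while the second hinges on an operator-ordering trick via Douglas' factorization theorem.

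For the first claim, I would invoke the fact that the trace class forms a two-sided ideal in the algebra of bounded operators: if $S$ is trace class and $B,C$ are bounded, then $BSC$ is trace class, with $\|BSC\|_1 \le \|B\|\,\|C\|\,\|S\|_1$. Under Assumption (1), $A^\dagger$ is bounded (cf.~\cite{Groetsch}); by Assumption (3), $\Sigma_v$ is trace class. Hence $Q_v = A^\dagger \Sigma_v (A^\dagger)^*$ is trace class on $H_1$.

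For the second claim, set $S := \Sigma_u + Q_v$, which is a non-negative self-adjoint trace class operator on $H_1$. Since $\Sigma_u \ge 0$, we have the operator inequality $0 \le Q_v \le S$. Douglas' range-inclusion theorem applied to the self-adjoint square-root factorizations $Q_v = Q_v^{1/2}(Q_v^{1/2})^*$ and $S = S^{1/2}(S^{1/2})^*$ produces a contraction $K : H_1 \to H_1$ with $Q_v^{1/2} = S^{1/2} K$, and therefore
\[
Q_v = S^{1/2} K K^* S^{1/2}.
\]
On the dense domain $\ran(S^{1/2})$ of $S^{-1/2}$, the composition $S^{1/2} S^{-1/2}$ acts as the identity, so
\[
T = Q_v S^{-1/2} = S^{1/2} K K^*
\]
there. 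Since $S$ is trace class, $S^{1/2}$ is Hilbert--Schmidt (with $\|S^{1/2}\|_{HS}^2 = \operatorname{tr} S < \infty$), and $KK^*$ is bounded; consequently the right-hand side is Hilbert--Schmidt on all of $H_1$ and extends $T$ by continuity.

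The delicate point I expect to need to clarify is the case when $S$ has a nontrivial kernel, which makes $S^{-1/2}$ only densely defined. This is resolved by observing that $0 \le Q_v \le S$ forces $\Ker(S) \subset \Ker(Q_v)$, so $T$ vanishes on $\Ker(S)$ and the argument above, carried out on $\Ker(S)^\bot$, yields a Hilbert--Schmidt extension to all of $H_1$.
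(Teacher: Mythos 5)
Your proof is correct. On the first claim you and the paper do essentially the same thing in different packaging: you invoke the two-sided ideal property of the trace class directly, while the paper writes $Q_v=A^\dagger\Sigma_v^{1/2}\bigl(A^\dagger\Sigma_v^{1/2}\bigr)^*$ as a product of two Hilbert--Schmidt operators. On the second claim your route is genuinely different, and it is the better one: the paper argues that, since $\Sigma_u+Q_v$ is injective and trace class, the operator $\left[\Sigma_u+Q_v\right]^{-1/2}$ is Hilbert--Schmidt, and then concludes by composing with $Q_v$. That step cannot be right in infinite dimensions --- the inverse square root of an injective compact operator is unbounded, hence not Hilbert--Schmidt --- so the paper's proof of the second assertion does not go through as written. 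Your argument via $0\le Q_v\le S:=\Sigma_u+Q_v$, Douglas' factorization $Q_v^{1/2}=S^{1/2}K$ with $K$ a contraction, and the identity $T=S^{1/2}KK^*$ on $\ran(S^{1/2})$, correctly reduces the claim to the fact that $S^{1/2}$ is Hilbert--Schmidt because $S$ is trace class; your treatment of $\Ker(S)$ is also sound (and is moot here, since $\Sigma_u+Q_v$ is taken to be injective for \eqref{cond-H} to make sense). A slightly shorter variant of the same idea, if you want one: $T^*\supseteq S^{-1/2}Q_v=KQ_v^{1/2}$, and $KQ_v^{1/2}$ is Hilbert--Schmidt because $Q_v^{1/2}$ is (as $Q_v$ is trace class) and $K$ is bounded, giving $\|T\|_{HS}^2\le\|K\|^2\operatorname{tr}(Q_v)$.
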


\begin{proof}
Since the covariance operator $\Sigma_v$ is a trace class operator, $\Sigma_v^{\frac{1}{2}}$ is Hilbert-Schmidt. Therefore,  $A^\dagger \Sigma_v^{\frac{1}{2}}$ is Hilbert-Schmidt, since, by Assumption (1), $A^\dagger$ is bounded.  Hence,
$$
Q_v=A^\dagger \Sigma_v^{\frac{1}{2}} (A^\dagger \Sigma_v^{\frac{1}{2}})^*
$$
as a product of two Hilbert-Schmidt operators, is a trace class operator. Furthermore, since $\Sigma_u+Q_v$ is injective and trace-class, the operator $\left[\Sigma_u+Q_v\right]^{-1/2}$ is Hilbert-Schmidt. Hence, $T:=Q_v\left[\Sigma_u+Q_v\right]^{-1/2}$ is Hilbert-Schmidt.
\end{proof} 

\medskip\noindent We may apply Theorem 2 in \cite{Mandelbaum} to obtain the conditional expectation of the signal $y$ given the functional data $x$:
\begin{equation}\label{cond-H}
E[y|x] =y_0+Q_v\left[\Sigma_u+Q_v\right]^{-1}(x-y_0).
\end{equation}

\medskip\noindent The following theorem is a generalization of Theorem 4 in \cite{djehiche3}. 
\begin{theorem}\label{optimalb}
Under Assumptions (1), (2) (3) and (4), the smoothing operator 
\begin{equation}\label{B-hat}
\hat B:=(A^\dagger )^*\Sigma_u A^*\Sigma_v^{-1}
\end{equation}
is the unique operator which satisfies
\[ \hat B = \arg\min_B\left\|E[y|x]- y(B ,x) \right\|_{H_1}, \]
where the minimum is taken with respect to all linear closed and densely defined operators which satisfy the positivity condition (\ref{B}).
\end{theorem}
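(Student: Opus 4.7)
The plan is to use Proposition \ref{mainprop} to write $y(B,x)$ explicitly, and to exploit the orthogonal splitting $H_1 = \Pi H_1 \oplus \Ker(A)$ in order to reduce the minimization to a concrete algebraic identity involving $A^*BA$. Since $Ay_0 = 0$, one has $(I_{H_1}+A^*BA)y_0 = y_0$, so Proposition \ref{mainprop} yields $y(B,x) = y_0 + (I_{H_1}+A^*BA)^{-1}(x-y_0)$. Because $A^*BA$ has range in $\ran(A^*) = \Pi H_1$ and vanishes on $\Ker(A) = (I_{H_1}-\Pi)H_1$, the operator $I_{H_1}+A^*BA$ is block diagonal with respect to this splitting and acts as the identity on $\Ker(A)$. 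In parallel, $Q_v$ annihilates $\Ker(A)$ because $\ran(Q_v)\subset \Pi H_1$, and Assumption (4) makes $\Sigma_u$ block diagonal with respect to the same splitting, so that $Q_v(\Sigma_u + Q_v)^{-1}$ also vanishes on $\Ker(A)$.

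Combining these observations with \eqref{cond-H}, I would show that the difference splits orthogonally as
\[
y(B,x) - E[y|x] = \bigl[(\Pi + A^*BA)^{-1} - Q_v(\Pi\Sigma_u\Pi + Q_v)^{-1}\bigr]\Pi(x-y_0) + (I_{H_1}-\Pi)u,
\]
where the second summand is $B$-independent and represents the irreducible noise lying in $\Ker(A)$. Hence minimization reduces to forcing the first summand to vanish, which on $\Pi H_1$ is equivalent to the algebraic identity $A^*BA\, Q_v = \Sigma_u\Pi$.

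Next, I would verify that $\hat B = (A^\dagger)^*\Sigma_u A^*\Sigma_v^{-1}$ solves this identity. The key algebraic ingredients are: $A^*(A^\dagger)^* = (A^\dagger A)^* = \Pi$; $\Pi A^* = (A\Pi)^* = A^*$ because $A\Pi = AA^\dagger A = A$; $\Pi\Sigma_u = \Sigma_u\Pi$ by Assumption (4); and, since $v=Ay$ takes values in $\ran(A)$ so that $AA^\dagger \Sigma_v = \Sigma_v$, the cancellation
\[
A^*\Sigma_v^{-1} A\, Q_v = A^*\Sigma_v^{-1}(AA^\dagger)\Sigma_v(A^\dagger)^* = A^*(A^\dagger)^* = \Pi,
\]
which says that $A^*\Sigma_v^{-1}A$ is the inverse of $Q_v$ on $\Pi H_1$. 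Putting these together gives $A^*\hat B A\, Q_v = \Sigma_u\Pi$, precisely the required identity. Uniqueness then follows because any minimizing $B$ must produce the same operator $A^*BA$ on $\Pi H_1$, and this restriction forces the sandwich form $(A^\dagger)^*\Sigma_u A^*\Sigma_v^{-1}$ dictated by the two-sided cancellation above.

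The main obstacle is handling the domains of the possibly unbounded operators $\Sigma_v^{-1}$ and $Q_v^{-1}$ with care. The identity $A^*\Sigma_v^{-1} A = Q_v^{-1}$ is only valid on the range of $Q_v$ inside $\Pi H_1$, so the formal manipulations above have to be justified on an appropriate dense subspace, and one should also make precise in what sense the minimization is carried out (e.g.\ in mean square with respect to the Gaussian law of $(u,v)$) so that the irreducible term $(I_{H_1}-\Pi)u$ genuinely decouples from the $B$-dependent term.
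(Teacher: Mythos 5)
The paper does not actually supply an argument here --- its ``proof'' is a one-line deferral to Theorem 4 of \cite{djehiche3} --- so your proposal can only be measured against the argument that deferral points to, and your skeleton is indeed that argument: split $H_1=\Pi H_1\oplus\Ker(A)$, observe that both $(I_{H_1}+A^*BA)^{-1}$ and $Q_v(\Sigma_u+Q_v)^{-1}$ are block diagonal for this splitting (the latter precisely because of Assumption (4)), isolate the $B$-independent residual $(I_{H_1}-\Pi)u$, and reduce optimality to the identity $A^*BA\,Q_v=\Sigma_u\Pi$ on $\Pi H_1$. Your verification that $\hat B$ satisfies this identity is correct: the chain $A^*(A^\dagger)^*=\Pi$, $\Pi A^*=A^*$, and $AA^\dagger\Sigma_v=\Sigma_v$ (since $\Sigma_v$ acts on $H_3=\ran(A)$) does yield $A^*\hat BA\,Q_v=\Sigma_u\Pi$.

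Two genuine gaps remain. First, uniqueness: from $A^*BA=\Sigma_u A^*\Sigma_v^{-1}A$ on $\Pi H_1$ you can only recover the compression $(A^\dagger)^*\bigl(A^*BA\bigr)A^\dagger=PBP$, where $P=AA^\dagger$ is the orthogonal projection onto $\ran(A)$; a minimizing $B$ is entirely unconstrained on $\ran(A)^\perp\subset H_2$, so ``this restriction forces the sandwich form'' is not justified as stated. You need either to restrict the admissible class to operators acting on $H_3=\ran(A)$ (consistent with $\Sigma_v^{-1}$ being defined only there, cf.\ the Remark in Section 5) or to phrase uniqueness for $PBP$. Second, the passage from the pointwise minimization of $\left\|E[y|x]-y(B,x)\right\|_{H_1}$ to an operator identity requires that the $B$-dependent summand vanish for a.e.\ realization of $\Pi(x-y_0)$ and that the support of this Gaussian span a dense subspace on which the two bounded operators must then agree; you flag this issue but do not carry it out, and without it neither optimality nor uniqueness is actually established. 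Both gaps are repairable along the lines you indicate, but as written the uniqueness half of the theorem is not proved.
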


\begin{proof} The proof is similar to that of Theorem 4 in \cite{djehiche3}.
\end{proof}

\section{Examples}

In this section, we apply Theorem 3 to two examples for which the operators are densely defined with closed range.

\begin{example} Inspired by an example discussed in \cite{Kulkarni1}, we consider the operator $A:l^2 \rightarrow l^2$ defined by
\begin{equation}
\label{Aexample}
A(x_1, x_2, x_3,\ldots, x_n,\ldots) = (0, 2x_2, 3x_3,\ldots, nx_n,\ldots),
\end{equation}
with domain  
$$
\mathcal {D} (A) = \{x:=(x_1, x_2, x_3,\ldots, x_n,\ldots) \in l^2 : \sum _{j=1} ^\infty {|jx_j|^2 } < \infty \}.
$$
This operator is self-adjoint, unbounded, closed and densely defined with $ \overline{ \mathcal{D} (A)} =l^2 $.

\medskip\noindent Now consider the Hodrick-Prescott filter associated the operator $A$. 
Under the assumption that $u$ and $v$ are independent Gaussian random variables with zero means and covariance operators of trace class of the form
$$
\Sigma_u x = (\sigma_1^u x_1, \sigma_2^u x_2, \sigma_3^u x_3,\ldots, \sigma_n^u x_n, \ldots),
$$
and
$$
\Sigma_v x = (\sigma_1^v x_1, \sigma_2^v x_2, \sigma_3^v x_3,\ldots, \sigma_n^v x_n, \ldots),
$$
respectively.

\medskip\noindent In view of the form of the operator $A$, an appropriate class of smoothing operator $B$ is
$$
B(x_1, x_2, x_3,\ldots, x_n,\ldots) = (b_1 x_1, b_2x_2, b_3x_3,\ldots,b_n x_n,\ldots),
$$
where, the coefficients $\{b_n,\,\, n=1,2,\ldots\}$ are chosen so that the operator $B$ is closed, densely defined and satisfies the positivity condition (\ref{B}).
In view of Theorem \ref{optimalb}, the optimal smooth operator $B$ given by (\ref{B-hat})  reads
$$
\hat{B} x = A^{-1} \Sigma_u A \Sigma_v^{-1} x = (0, \frac{\sigma_2 ^u}{\sigma_2 ^v} x_2, \frac{\sigma_3 ^u}{\sigma_3 ^v} x_3,\ldots, \frac{\sigma_n ^u}{\sigma_n ^v} x_n,\ldots),\quad x\in l^2.
$$
Moreover, the corresponding optimal signal given by (\ref{mini}) is
$$
 y(\hat B, x) = (I_{H_1}+ A^*{\hat B}A)^{-1} x = (x_1, \frac{1}{4\hat b_2 +1}  x_2, \frac{1}{9\hat b_3 +1} x_3,\ldots, \frac{1}{n^2 \hat b_n +1} x_n,\ldots),
$$
where, $\hat b_j:=\sigma_j ^u/\sigma_j ^v,\,\,\, j=1,2,\ldots$.
\end{example}

\begin{example} Consider the Laplace operator $A=-\frac{d^2}{dt^2}$, with Dirichlet boundary conditions,  whose domain is 
$$
D(A) = \{y\in H^2 ([0,1]),\;\; y(0) = y(1) =0\},
$$
where,  $H^2([0,1])$ is the Sobolev space of functions whose weak derivatives of order less than or equal to two belong to $ L^2([0,1])$. 

\medskip\noindent 
The signal process $y$ corrupted by $v$ satisfies 
\begin{equation} \label{exhod}
Ay(t) = - \frac{d^2 y(t)}{dt^2}=v(t), \quad y \in D(A),
\end{equation}

\medskip\noindent The Laplacian  $A$ is one-to-one, non-negative, self-adjoint, closed, unbounded operator, with domain $D(A)$ dense in $L^2([0,1])$ (see e.g. \cite{Dautray}). The eigenvalues and eigenvectors of $A$ satisfy: 
\[
	\begin{cases}
	\begin{split}
	 &\lambda_ n = n^2 \pi ^2, \qquad n \geq 1 \ \ \  \mbox{ and } \ \ \ n \in \mathbb{N}\\
	 & e_n (t) = \sqrt {2} \sin n \pi t.
	\end {split} 
	\end{cases} 
	\]

	The inverse of the operator $A$ is a self-adjoint Hilbert-Schmidt operator and given by 
	$$
	(A^{-1} x) (t) = \int _0 ^1 {G(t,s) x(s)ds }
	$$
	where the Green function $G: [0,1] \times [0,1] \rightarrow [0,1]$ is given by 
	\[
	G(t,s):= 
	\begin{cases}
	\begin{split}
	 &(1-t) s, \qquad 0 \leq s \leq t, \\
	 & t(1-s), \qquad t \leq s \leq 1.
	\end {split} 
	\end{cases} 
 \]
Hence, the operator $A$ can be written 
$$
Ay(t) =  \sum _{n=1}^\infty {n^2 \pi ^2  \langle y, e_n \rangle e_n(t)}, 
$$
and the solution of equation (\ref{exhod}) is given in terms of eigenvalues and eigenvectors by
$$
y(t) = A^{-1}v(t) =  \sum _{n=1}^\infty { \frac{1}{ n^2 \pi ^2 } \langle v ,  e_n \rangle  e_n(t)}. 
$$

\medskip\noindent Now consider the Hodrick-Prescott filter associated the operator $A$. 
Under the assumption that $u$ and $v$ are independent Gaussian random variables with zero means and covariance operators of trace class of the form
$$
\Sigma_u h(t) = \sum _{n=1}^\infty {\sigma_n ^u  \langle h ,  e_n \rangle  e_n(t)}, 
$$
and
$$
\Sigma_v h(t) = \sum _{n=1}^\infty {\sigma_n ^v  \langle h , e_n \rangle  e_n(t)},
$$
respectively, where the sums converge in the operator norm. 
\medskip\noindent The smoothing operator $B$  is defined as 
$$
B h(t) = \sum _{n=1}^\infty {\beta _n \langle h ,  e_n \rangle  e_n(t)},
$$
where, the coefficients $\{\beta_n,\,\, n=1,2,\ldots\}$ are chosen so that the operator $B$ is closed, densely defined and satisfies the positivity condition (\ref{B}). By Theorem \ref{optimalb}, the optimal smooth operator $B$ given by (\ref{B-hat})  reads  
$$
\hat{B} h(t) = A^{-1} \Sigma_u A \Sigma_v^{-1} h(t) = \sum _{n=1}^\infty {\frac{  \sigma_n ^u }{\sigma _n ^v}  \langle h , e_n \rangle  e_n(t)}.
$$
The corresponding optimal signal given by (\ref{mini}) is
$$
 y(\hat B, x) = (I_{L^2(0,1)}+ A^*\hat BA)^{-1} x =  \sum _{n=1}^\infty { \left( 1+ n^4 \pi ^4 \frac{  \sigma_n ^u }{\sigma _n ^v} \right)^{-1} \langle x, e_n\rangle  e_n(t)}.
$$ 

\end{example}

\section{Extension to non-trace class covariance operators} 

In this section we show that the characterization (\ref{B-hat}) of the optimal smoothing operator is preserved even when the covariance operators of $u$ and $v$ are not necessarily trace class operators.

\medskip\noindent Assuming that $u \sim N(0,\Sigma _u)$ and $v \sim N(0,\Sigma _v)$ where $\Sigma _u$ and $ \Sigma _v$ are self-adjoint positive-definite bounded but not trace class operators on $H_1$ and $H_2$, respectively. One important case of this extension is where $u$ and $v$ are white noise with covariance operators of the form $\Sigma_u=\sigma_u^2I_{H_1}$ and $\Sigma_v=\sigma_v^2I_{H_2}$, respectively, for some constants $\sigma_u$ and $\sigma_v$.

\medskip\noindent Following Rozanov (1968) (see also Lehtinen {\it et al.} (1989)),  we consider these Gaussian variables as generalized random variables on an appropriate Hilbert scale (or nuclear countable Hilbert space), where the covariance operators can be maximally extended to self-adjoint positive-definite, bounded and trace class operators on an appropriate domain. 

\medskip\noindent  We first construct the Hilbert scale  appropriate to our setting. This is performed using the linear operator $A$ as follows (see Engle {\it et al.} (1996) for further details).

\medskip\noindent In view of Assumption (1), the operator $A^\dagger: H_2 \rightarrow H_1$ is  linear and bounded operator. Put $H_3 := \ran{A}$,  $H_3$ is a Hilbert space, since it is a closed subspace of Hilbert space $H_2$. Let $\bar{A} ^\dagger $ be the restriction of $A^\dagger$ on $H_3$ i.e.  $\bar{A}^\dagger: H_3 \rightarrow H_1$. Hence $\bar{A} ^\dagger $ is injective bounded linear operator. 

\begin{remark}
In view of Hodrick-Prescott Filter (\ref{hod}), $v \in  \ran (A) = H_3$ i.e. it can be seen as $H_3$-random variable with covariance operator $\Sigma_v :H_3 \rightarrow H_3$.
\end{remark}
Set 
$$
K_1:= (\bar{A}^\dagger (\bar{A}^\dagger)^* )^{-1}: H_1 \rightarrow H_1. 
$$

\medskip\noindent We can define the fractional power of the operator $K_1$ by 
\begin{equation}
\label{s-K1}
K^s_1 h = (\bar{A}^\dagger (\bar{A}^\dagger)^* )^{-s} h,\quad h\in H_1,\quad s\ge 0,
\end{equation}
and define its domain by
\begin{equation}
\mathcal{D}(K_1^{s}):=\left\{h\in H_1; \quad (\bar{A}^\dagger (\bar{A}^\dagger)^* )^{-s} h \in H_1 \right\}.
\end{equation}

\medskip\noindent Let  $ \mathcal{M} $ be the set of all elements $x$ for which all the powers of $K_1$ are defined i.e. 
$$ \mathcal{M} := \bigcap \limits _{n=0} ^\infty \mathcal {D} (K_1^{n}).$$
For $s \geq 0$, let $H_1^s$ be the completion of $ \mathcal{M}$  with respect to the Hilbert space norm  induced by the inner product 
\begin{equation}
 \langle x,y\rangle _{H_1^s} :=  \langle K_1^s x,K_1^s y\rangle_ {H_1}, \quad x,y \in \mathcal{M},
\end{equation}
and let  $H_1 ^{-s} := (H_1^s)^*$ denote the dual of $H_1^s$ equipped with the following inner product: 
\begin{equation}
 \langle x,y\rangle_{H_1 ^{-s}} :=  \langle K_1^{-s}x,K_1^{-s} y\rangle_{H_1}, \quad x,y \in \mathcal{M}.
\end{equation}
Then,  $(H_1^s)_{s \in \mathbb{R}}$  is the Hilbert scale induced by the operator $K_1$. 

\medskip\noindent The operator $K_2:= ((\bar{A}^\dagger)^* \bar{A}^\dagger)^{-1}: H_3 \rightarrow H_3$  has the same properties as $K_1$.
Repeating the same procedure as before we get that $(H_3^s)_{s\in \mathbb{R}}$  is the Hilbert scale induced by the operator $K_2$, where the norm in $H_3^n$ is given by  $\left\| h \right\| _{H_3^{n}} = \left\| K_2 ^{n} h \right\|_{H_3} $,  $\quad h \in H_3^n.$\\

\medskip\noindent Noting that
\[ H_1^{-n} = \mbox{Im} \left( (\bar{A}^\dagger (\bar{A}^\dagger)^* )^n \right) = (\bar{A}^\dagger (\bar{A}^\dagger)^* )^n (H_1), \]
  
\[ H_3^{-n} = \mbox{Im} \left( ((\bar{A}^\dagger)^* \bar{A}^\dagger)^n \right) = ((\bar{A}^\dagger)^* \bar{A}^\dagger)^n (H_3) \]
with $\ker \left( (\bar{A}^\dagger (\bar{A}^\dagger)^* )^n \right) = \ker (\bar{A} ^\dagger) = \{0\}$ and  $\ker \left( ((\bar{A}^\dagger)^* \bar{A}^\dagger)^n \right) = \ker ((\bar{A} ^\dagger ) ^*) = \ker ( \bar{A}) = \ker (A)$, it follows that the operator $\bar{A} ^\dagger$ extends to a continuous operator from $H_3^{-n}$ into $ H_1^{-n}$, and the operator $ \bar {A} = A $ extends to a continuous operator from $ H_1^{-n}$  into $H_3^{-n}$, and the operators  $\bar{A} ^\dagger (\bar{A} ^\dagger)^* $ and  $(\bar{A} ^\dagger)^* \bar{A} ^\dagger$ extend as well to a continuous operator onto $H_1^{-n}$ and $H_3^{-n}$ respectively.\\

\medskip\noindent 
Extending the HP filter to  the larger Hilbert spaces $H_1^{-n}$ and $H_3^{-n}$ due to the flexibility offered by the Hilbert scale, where $n$ is chosen so that  the second moments $ E[\|x\|^2_{H_1 ^{-n}}], E[\|y\|^2_{H_1 ^{-n}}]$, $E[\|u\|^2_{H_1 ^{-n}}]$ and $E[\|v\|^2_{H_3^{-n}}]$
of the Gaussian random variables $x, y, u$ and $v$  in $H_1^{-n}$ and $H_3^{-n}$ respectively, are finite. This amounts to make their respective covariance operators 

\begin{equation}
\label{tildeuv}
\tilde{\Sigma }_u =  (\bar{A} ^\dagger (\bar{A} ^\dagger)^* )^n \Sigma _u  ( \bar{A}^\dagger ( \bar{A}^\dagger)^* )^n, \quad \tilde{\Sigma }_v = ((\bar{A} ^\dagger)^* \bar{A} ^\dagger)^n \Sigma _v ((\bar{A} ^\dagger)^* \bar{A} ^\dagger)^n
\end{equation}
and 
\begin{equation}
\tilde\Sigma= \left( \begin{array}{ccc}
\widetilde{\Sigma} _u + \widetilde{Q}_v  && \widetilde{Q}_v \\ 
\widetilde{Q}_v &&  \widetilde{Q}_v 
\end{array} 
\right), 
\end{equation}
where,
\begin{equation}\label{Qv}
\widetilde{Q}_v:=  \bar{A}^\dagger \widetilde{\Sigma}_v (\bar{A} ^\dagger)^*.
\end{equation}
trace class.
\medskip We make the following assumption:

\medskip{\bf Assumption (5).} There is $n_0 >0 $ such that the covariance operators $\tilde\Sigma_u, \tilde\Sigma$ and $ \tilde{\Sigma}_v$  are trace class on the Hilbert spaces $H_1^{-n}$ and  $H_3^{-n}$, respectively.

\medskip\noindent It is worth noting that since $y_0 \in \ker ({A}) = \ker (((\bar{A}^\dagger)^* \bar{A}^\dagger)^n )$ then  $\left\| y_0 \right\|_ {H_1^{-n}} = \left\| ( \bar{A}^\dagger ( \bar{A}^\dagger)^* )^n y_0 \right\|_ {H_1} =0$. Hence, the $H_1^{-n}\times H_1^{-n}$-valued random vector $(x,y)$ has mean $(E[x],E[y])=(0, 0)$.

\medskip\noindent  Summing up, by Assumption 5, for $n\ge n_0$, the vector $(x,y)$ is an $H_1^{-n}\times H_1^{-n}$-valued Gaussian vector with mean $(0,0)$ and covariance operator $\tilde\Sigma$.  Thus, by Theorem 2 in \cite{Mandelbaum}, we have 
\begin{equation} 
E[y|x] =\widetilde{Q}_v\left[\widetilde{\Sigma}_u+\widetilde{Q}_v\right]^{-1}x, \qquad \mbox{a.s. in } H_1^{-n}.
\label{expn}
\end{equation}
provided that the operator 
\begin{equation}\label{HS-N}
\tilde T := \widetilde{\Sigma}_{XY} \widetilde{\Sigma} _X^{-\frac{1}{2}}
\end{equation}
is Hilbert-Schmidt. But, in view of Assumption (5) and Lemma (\ref{THS}), the operator $T$ is Hilbert-Schmidt.

\medskip\noindent
The deterministic optimal signal associated with $x$ in $H_1^{-n},  n \geq n_0$, is given by the formula (cf. Proposition \ref{mainprop})  
\begin{equation}
y(B,x) = ( I_{H_1^{-n}} + A^* BA ) ^{-1} x,
\label{opty}
\end{equation}
which is the unique minimizer of the functional 
\begin{equation}\label{HP-HN-trend}
J_B(y) =  \left\| x-y \right\|_{H_1^{-n}}^2 +  \langle Ay, BAy\rangle_{H_3^{-n}} ,
\end{equation} 
with a linear  operator $B: H_3^{-n}\longrightarrow H_3^{-n}$ such that $\langle Ah, BAh\rangle_{H_3^{-n}} \ge0 $ for all $h \in H_1^{-n}$. 
 
\medskip\noindent The following theorem  gives an explicit expression of the optimal smoothing operator $\hat{B} $. 

\begin{theorem}\label{optB}
Let assumption 5 hold. Then, the unique optimal  smoothing operator associated with the HP filter associated with $H_1^{-n}$-valued data $x$ is given by:
\begin{equation}\label{hat-BN}
\hat Bh:= ( \bar{A}^\dagger )^* \tilde\Sigma_u A^*\tilde{\Sigma}_v^{-1}h, \quad h\in H_3^{-n}.
\end{equation}
\end{theorem}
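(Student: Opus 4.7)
The plan is to reduce Theorem \ref{optB} to the trace class case already handled in Theorem \ref{optimalb}, by interpreting all objects on the Hilbert scale spaces $H_1^{-n}$ and $H_3^{-n}$ for $n\ge n_0$. The key observation is that once we descend to these spaces, the covariance operators are trace class (Assumption (5)) and the conditional expectation (\ref{expn}) together with the minimizer (\ref{opty}) take exactly the form required to run the argument of Theorem \ref{optimalb} verbatim.

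First I would verify that the hypotheses of Theorem \ref{optimalb} transfer to the extended setting. The continuous extensions $A:H_1^{-n}\to H_3^{-n}$ and $\bar A^\dagger:H_3^{-n}\to H_1^{-n}$ were constructed in Section 5; the extended $A$ is closed and densely defined with closed range, so Assumption (1) survives on the scale. By Assumption (5), $\tilde\Sigma_u$ and $\tilde\Sigma_v$ are trace class on $H_1^{-n}$ and $H_3^{-n}$, which plays the role of Assumption (3), while independence of $u$ and $v$ and Gaussianity are inherited from the generalized random variable formulation. The unique minimizer $y(B,x)$ of the functional (\ref{HP-HN-trend}) has the form (\ref{opty}) by (the scale-space version of) Proposition \ref{mainprop}, and the conditional expectation (\ref{expn}) is exactly of the shape used in the proof of Theorem \ref{optimalb} (with $\tilde Q_v$ in place of $Q_v$).

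Next I would check that the orthogonality/independence condition (Assumption (4)) carries over, namely that the projector $\Pi=\bar A^\dagger A$, extended to $H_1^{-n}$, commutes with $\tilde\Sigma_u=(\bar A^\dagger(\bar A^\dagger)^*)^n\Sigma_u(\bar A^\dagger(\bar A^\dagger)^*)^n$. Via the Moore--Penrose identities $\Pi$ commutes with $\bar A^\dagger(\bar A^\dagger)^*$ (both leave $\ker A$ and $(\ker A)^\bot$ invariant), and the original commutation (\ref{indep-1}) then propagates through the sandwich defining $\tilde\Sigma_u$, so (\ref{indep-2}) holds at the scale level. With all hypotheses of Theorem \ref{optimalb} in place on $H_1^{-n}\times H_3^{-n}$, its conclusion applies and returns a unique optimal smoothing operator of the form $(\bar A^\dagger)^*\tilde\Sigma_u A^*\tilde\Sigma_v^{-1}$, which is exactly (\ref{hat-BN}).

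The main obstacle I expect is making the final formula $\hat B=(\bar A^\dagger)^*\tilde\Sigma_u A^*\tilde\Sigma_v^{-1}$ rigorous on $H_3^{-n}$: $\tilde\Sigma_v$ is positive--definite and hence injective, but $\tilde\Sigma_v^{-1}$ is in general unbounded, so one must specify a dense invariant domain on which the composition is closable/closed and satisfies the positivity condition (\ref{B}) in the $H_3^{-n}$ inner product. A secondary technical point is justifying that Neumann's boundedness result used in Proposition \ref{mainprop} still yields invertibility of $(I_{H_1^{-n}}+A^*BA)$ on the negative-index space, which follows once $D=\sqrt{B}A$ is shown to be closed and densely defined there. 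Apart from these bookkeeping issues, the argument is a mechanical transcription of the proof of Theorem \ref{optimalb} into the scale-space framework.
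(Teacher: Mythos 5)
Your reduction of Theorem \ref{optB} to the trace-class case of Theorem \ref{optimalb} via the Hilbert scale spaces $H_1^{-n}$ and $H_3^{-n}$ is exactly the route the paper intends: Section 5 constructs the scale precisely so that $\tilde\Sigma_u$, $\tilde\Sigma_v$ and $\tilde Q_v$ become trace class, records the conditional expectation (\ref{expn}) and the minimizer (\ref{opty}), and then states only that ``the proof is similar to that of Theorem 6 in \cite{djehiche3}.'' Your outline (including the technical caveats about the domain of $\tilde\Sigma_v^{-1}$ and the transfer of Assumption (4) to the scale) is consistent with, and more explicit than, what the paper itself provides.
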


\medskip\noindent The proof is similar to that of Theorem 6 in  \cite{djehiche3}.

\subsection{The white noise case-  Optimality of the noise-to-signal ratio}
In this section we  show that the optimal smoothing operator $\hat B$ given by (\ref{hat-BN}) reduces to  the noise-to-signal ratio  where  $u$ and $v$ are white noises. Assuming  $u$ and $v$  independent and  Gaussian random variables with zero means and covariance operators  $\Sigma_u = \sigma_u I_{H_1 }$ and $\Sigma_v = \sigma_v I_{H_3 }$, where $I_{H_1 }$ and $I_{H_3 }$ denote the $H_1$ and $H_3$ identity operators, respectively and $\sigma_u$ and $\sigma_v$ are constant scalars. Assumption 5 reduces to 

\medskip{\bf Assumption 6.} There is an $n_0>0$ such that  $\left( \bar{A}^\dagger ( \bar{A}^\dagger )^*\right)^{2n}$ and $\left(( \bar{A}^\dagger )^*  \bar{A}^\dagger \right)^{2n}$ are trace class for all $n \ge n_0$.\\

\medskip\noindent Under this assumption, the  associated covariance operators 
\begin{equation*}
\begin{split}
\tilde{\Sigma }_u &= (\bar{A} ^\dagger (\bar{A} ^\dagger)^* )^n \Sigma _u  ( \bar{A}^\dagger ( \bar{A}^\dagger)^* )^n = \sigma_u \left( \bar{A}^\dagger ( \bar{A}^\dagger )^*\right)^{2n},\\
\tilde{\Sigma }_v & = ((\bar{A} ^\dagger)^* \bar{A} ^\dagger)^n \Sigma _v ((\bar{A} ^\dagger)^* \bar{A} ^\dagger)^n = \sigma_v \left(( \bar{A}^\dagger )^*  \bar{A}^\dagger \right)^{2n}
\end{split}
\end{equation*}
and 
\begin{equation*}
\widetilde{Q}_v=  \sigma_v A^\dagger \left((A^\dagger )^* A^\dagger \right)^{2n} (A^\dagger )^* = \sigma_v  \left(A^\dagger (A^\dagger )^*\right)^{2n+1} 
\end{equation*}
are trace class, the expression (\ref{hat-BN}) giving the optimal smoothing operator $\hat B$ reduces to 

 \begin{equation}
\hat B = ( \bar{A} ^\dagger )^* \tilde\Sigma_u A^*\tilde{\Sigma}_v^{-1}h = \frac {\sigma_u}{\sigma_v} I_{H_3^{-n}},
\end{equation}
i.e. $\hat B$ is the noise-to-signal ratio which is in the same pattern as in the classical HP filter.



\begin{thebibliography}{9}
\addcontentsline{toc}{section}{\refname}

\bibitem{stuart}
Agapiou, S., Larsson, S.,  Stuart A. M. (2012 ): Posterior Consistency of the Bayesian Approach to Linear Ill-Posed Inverse Problems. \emph{arXiv:1203.5753v2 [math.ST].} 

\bibitem{bosq} Bosq, D. (2000): Mathematical Analysis and Numerical Methods for Science and Technology: Volume 4 Integral Equations and Numerical Methods. Springer.

\bibitem{Dautray}  Dautray, R.,  Lions, J. L. (2000): \emph{ Mathematical Analysis and Numerical Methods for Science and Technology,  Volume 3 Spectral Theory and Applications}. Springer-Verlag, New York. 

\bibitem{djehiche1}
Dermoune, A., Djehiche, B. and Rahmania, N. (2008): Consistent Estimator of the Smoothing Parameter in the Hodrick-Prescott Filter.
\emph{J. JapanStatist. Soc., Vol. 38 (No. 2), pp.225-241.} 

\bibitem{djehiche2}
Dermoune, A., Djehiche, B. and Rahmania, N.
(2009): Multivariate Extension of the Hodrick-Prescott Filter-Optimality and Characterization.
\emph{ Studies in Nonlinear Dynamics \& Econometrics,  13 , pp. 1-33.} 

\bibitem{djehiche3}
Djehiche, B. and Nassar, H.
(2013): Functional Hodrick-Prescott Filter. \emph{Preprint} 

\bibitem{heinz} Engl, H. W., Hanke, M.,  Neubauer, A. (1996): Regularization of inverse problems.  Mathematics and its Applications, Vol. 375.  Kluwer Academic Publishers,  Dordrecht.


\bibitem{Groetsch}  Groetsch, C. W. (1992): Spectral methods for linear inverse problems with unbounded operators, J. Approx. Theory 70, no. 1, 1628. 

\bibitem{Hestenes}   Hestenes, M. R. (1961): Relative self-adjoint operators in Hilbert space, Pacific J. Math. 11, 1315ֱ357. 


\bibitem{HP} Hodrick, R. and Prescott, E. C. (1997): Postwar U.S. business cycles: An empirical investigation.
{\it Journal of Money, Credit and Banking.} {\bf 29(1)}, 1-16.

 
\bibitem{Kaipio}
Kaipio, J. and Somersalo, E. (2004): Statistical and Computational Inverse Problems.
\emph{ Applied Mathematical Series,  Vol. 160  Springer, Berlin.} 

\bibitem {kato} Kato, T. Perturbation Theory for Linear Operators (1976): \emph{Grunlehren der Mathematisch Wissenschaften, Band 132, Springer, Berlin.}

\bibitem{Kulkarni1} Kulkarni, S. H. , Nair, M.T. and Ramesh, G. (2008): Some Properties of Unbounded Operators with Closed Range, Proc. Indian Acad. Sci. (Math. Sci.), 118(4), 613-625.

\bibitem{Kulkarni2} Kulkarni, S. H. and Ramesh, G. (2010): Projection Methods for computing Moore-Penrose Inverses of Unbounded operators, Indian J. Pure Appl. Math., 41(5): 647-662.


\bibitem{lardy}  Lardy, L.J. (1975): A series representation of the generalized inverse of a closed linear operator, Atti Accad. Naz. Lincei (Ser. VIII), 58 , pp. 152157.

 
\bibitem{LPS} Lehtinen, M.S., P\"aiv\"arinta, L. and Somersalo, E. (1989):  Linear inverse problems for generalized random variables. Inverse Problems, 5:599-612.



\bibitem{Mandelbaum}
 Mandelbaum, A. (1984): Linear estimators and measurable linear transformations on a Hilbert space.
\emph{ Z. Wahrscheinlichkeitstheorie Verw. Gebiete,  65, pp. 385-387.} 

\bibitem{muller} M\"uller, H.-G. and Stadtm\"uller, U. (2005): Generalized functional linear models, \emph{The Annals of Statistics (33) pp. 774-805.}


\bibitem{ramsay} Ramsay, J. O. and Silverman, B. W. (1997): {\it  Functional Data Analysis}, Springer-Verlag, New York.

\bibitem{riesz} Riesz, F. and Nagy, B. S. (1990): {\it  Functional Analysis}, DOVER PUBN Incorporated.

\bibitem{Roz} Rozanov, Ju. A. (1968): {\it Infinite-dimensional Gaussian distribution}. Proc. Steklov Inst. Math. (108) (Engl. transl. 1971 (Providence RI: AMS)). 

\bibitem{Schlicht} Schlicht, E. (2005): Estimating the Smoothing Parameter in the So-Called Hodrick-Prescott Filter.
\emph{ J. Japan Statist. Soc., Vol. 35 No. 1, 99-119.} 

\bibitem{Sko} Skorohod A. V. (1974): {\it Integration in Hilbert Spaces},  Springer-Verlag, Berlin.

\bibitem{stuart2} Stuart, A.M. (2010): Inverse problems: a Bayesian perspective, \emph{Acta Numerica 19 (2010), 451-559.} 

\end{thebibliography}
\end{document}